\newtheorem{thm}{Theorem}[section]
\newtheorem{prop}[thm]{Proposition}
\newtheorem{rmk}[thm]{Remark}
\theoremstyle{definition}  
\newtheorem{dfn}[thm]{Definition}
\newcommand{\q}{\quad}
\newcommand{\qq}{\quad\quad}
\newcommand{\qqq}{\quad\quad\quad}
\newcommand{\nf}{\infty}
\newcommand{\al}{\alpha}
\newcommand{\ga}{\gamma}
\newcommand{\de}{\delta}
\newcommand{\ve}{\varepsilon}
\newcommand{\la}{\lambda}
\newcommand{\si}{\sigma}
\newcommand{\vp}{\varphi}
\newcommand{\rrr}{\mathbf R}
\newcommand{\rn}{\mathbf R^n}
\newcommand{\zzz}{\mathbf Z}
\newcommand{\zp}{\mathbf Z^+}
\newcommand{\ccc}{\mathbf C}
\newcommand{\wlq}{L^{q,\infty}}
\newcommand{\f}{\frac}
\newcommand{\lab}{\label}
\newcommand{\bee}{\begin{equation}}
\newcommand{\eee}{\end{equation}}
\newcommand{\R}{\mathbb{R}}
\begin{document}

\title
{Remarks on countable subadditivity}

\author{Loukas Grafakos}
\address{Department of Mathematics, University of Missouri, Columbia, MO 65211, USA}
\email{grafakosl@missouri.edu}

\author[Monica Vi\c{s}an]{Monica Vi\c{s}an}
\address{Department of Mathematics, University of California, Los Angeles, CA 90095, USA}
\email{visan@math.ucla.edu}

\subjclass[2000]{47A30, 47A63, 42A99, 42B35.}

\keywords{}

\begin{abstract}
We discuss how countable subadditivity  of   operators can be derived from subadditivity  under mild forms of continuity, and provide examples manifesting such circumstances.
\end{abstract}

\maketitle

\section{Introduction}
 
A central theorem in modern analysis is Hunt's interpolation theorem, \cite{Hunt64, Hunt66}.  
Arising as the culmination of  earlier  studies, e.g. \cite{Calderon60, Calderon64, KreinSemenov, LionsPeetre, Peetre, SteinWeiss58, SteinWeiss59}, Hunt's theorem provides the definitive formulation of the celebrated Marcinkiewicz interpolation theorem in the setting of Lorentz spaces. It is well known that Lorentz spaces constitute  a natural scale of spaces that contain both $L^p$ and weak $L^p$ spaces.

There are elegant and well-documented treatments of these interpolation results centered around simple functions; for textbook presentations see, for example, \cite{CFA, SW:book}.  The interpolation results are then extended from simple functions to those Lorentz spaces in which simple functions are dense.  However, simple functions are \emph{not} dense in some of the most conspicuous Lorentz spaces arising in analysis, namely, the weak $L^p$ spaces.

Due to the reliance on the density of simple functions, standard proofs of interpolation theorems often offer incomplete treatments in the case of weak $L^p$ spaces.  Inevitably, interpolation results in these spaces   require different ad-hoc arguments tailored to the particular operators appearing in each application.  This shortcoming somewhat compromises a universal formulation in the   elegant    theory of interpolation.

In this note, we advocate that the property of countable subadditivity offers an effective and universal remedy to this problem. Countable subadditivity provides a streamlined approach to proving these interpolation theorems, obviating the need for various limiting procedures imposed by the consideration of simple functions, thus allowing 
the uniform treatment of all Lorentz spaces.  Accordingly, we believe that in addition to its scientific value, this approach has a significant expository and pedagogical merit.  

In Proposition~\ref{P1}, we prove that the hypotheses of the Hunt interpolation theorem already guarantee countable subadditivity of the operator.  Indeed, we show that a subadditive operator satisfying the hypotheses of Hunt's theorem is automatically countably subadditive on the interpolated spaces.

More generally, the question we explore in this paper is under what reasonable conditions a subadditive mapping may actually be countably subadditive.  Recall that a mapping $T$ defined on a normed vector  space  $(W, |\cdot |_W)$ and taking values in another  normed vector  space $(V, |\cdot |_V)$ is called {\it subadditive} if 
\bee\lab{0}
\big| T(f+g) \big|_{V} \le \big| T(f ) \big|_{V}+\big| T( g) \big|_{V}
\eee
for every $f,g\in W$.  We say that $T$ is {\it countably subadditive} if whenever a series $\sum_{j\in \zzz} f_j$ converges in the norm of $W$, we have  
\bee\lab{1}
\Big| T\Big(\sum_{j\in \zzz} f_j \Big) \Big|_{V} \le \sum_{j\in \zzz}\big| T(f_j ) \big|_{V} .
\eee

Many examples indicate that in general, a subadditive operator need not be countably subadditive. For instance, we may define $T$ from $L^1([0,1])$ to $\rrr$  via
$$
T(f)= \min\Big\{1, \limsup_{n\to \infty} \, n\!\int_0^{\frac1n} |f(x)|\,dx \Big\}.
$$
The  functional $T$ is certainly subadditive but not countably subadditive on $L^1([0,1])$, as the sequence $ f_j= \chi_{(\frac1{j+1}, \frac1j]}$, $j \in \zzz^+$, violates \eqref{1}.

A positive answer to the question of when subadditivity implies countable subadditivity can be given if the (not necessarily linear) operator $T$ is assumed to be {\it ``continuous at zero''} in the sense that given $\ve>0$ there exists $\de>0$ such that 
\bee\lab{2}
f\in W, \q | f|_W <\de\implies | T(f) |_V <\ve.  
\eee
This property implies that the action of $T$ on the tail of a convergent series tends to zero in norm, and countable subadditivity may be easily deduced from this and the subadditivity property. 

Although continuity implies countable subadditivity, it should be noted that the reverse implication is not valid. For instance, the linear operator $L(f) (x) = g(x) \int_{\rrr} f(t)\, dt$ 
is countably subadditive on $L^1(\rrr)$, but it is not continuous from $L^1(\rrr)$ to any reasonable space, if $g$ is a measurable function that exhibits bad behavior everywhere. 

In many situations arising in analysis, $V$ and $W$ are not normed spaces but rather quasi-normed spaces, which means that the triangle inequality in \eqref{0} holds with the appearance of a multiplicative constant on the right-hand side. Quasi-normed spaces are ubiquitous in analysis: for instance, Hardy spaces, $L^p$ spaces, and Lorentz spaces with indices less than $1$ are all examples of such spaces.

When $V$ and $W$ are quasi-normed spaces, it is reasonable to replace \eqref{0} by the {\it quasi-additivity} condition
\bee\lab{3}
\big| T(f+g) \big|_{V} \le K\big( \big| T(f ) \big|_{V}+\big| T( g) \big|_{V}\big) 
\eee
for all $f,g $ in $W$ and some fixed constant $K> 1$.  Then under the assumptions \eqref{2} and \eqref{3}, $T$ enjoys the {\it countable $\alpha$-quasi-additivity} property 
\bee\lab{5}
\Big| T\Big(\sum_{j\in \zzz} f_j \Big) \Big|_{V}^\al \le 4\sum_{j\in \zzz}\big| T(f_j ) \big|_{V}^\al 
\eee
for all series $\sum_{j\in \zzz} f_j$ that converge in the quasi-norm of $W$. Here $\al$  is the positive constant that satisfies  $(2K)^\al=2$.

Assertion \eqref{5} can be derived as follows: by the Aoki--Rolewicz theorem~\cite{A,KPR,R} one has   
\bee\lab{6}
\Big| T\Big(\sum_{j\in \zzz} f_j \Big) \Big|_{V}^\al \le 4\sum_{|j|\le N}\big| T(f_j ) \big|_{V}^\al 
+ 4 \,\Big| T\Big(\sum_{|j|>N} f_j \Big) \Big|_{V} ^\al 
\eee
for any $N\in \zp$.  Then given $\ve>0$, the last term on the right-hand side in \eqref{6} can be made smaller than $4\, \ve^\al$, provided $N$ is large enough such that $| \sum_{|j|>N} f_j|_W<\de$. Letting 
$N\to \nf$ we obtain 
\bee\lab{7}
\Big| T\Big(\sum_{j\in \zzz} f_j \Big) \Big|_{V}^\al \le 4\sum_{j\in \zzz}\big| T(f_j ) \big|_{V}^\al 
+ 4\, \ve^\al  
\eee
and, as $\ve>0$ was arbitrary, we deduce \eqref{5}. 

In this note, we consider the issue of countable subadditivity and countable $\alpha$-quasi-additivity in the situation when the underlying spaces $W$ and $V$ are Lorentz spaces over $\si$-finite measure spaces.  Our main observation is that the continuity assertion \eqref{2} can be derived from weak-type, or even restricted weak-type estimates.  This is encapsulated in Proposition~\ref{P1}, which guarantees that a subadditive operator satisfying the hypotheses of Hunt's interpolation theorem is countably subadditive on the interpolated spaces.  As observed earlier, this allows one to prove the Hunt theorem directly (without resorting to simple functions) and uniformly on all Lorentz spaces, even those in which simple functions are not dense.

Further applications are discussed in Section \ref{S:Applications}; these include extensions of the Yano extrapolation theorem and the Calder\'on--Zygmund theorem, as well as a result concerning $0$-local operators.

\subsection{Acknowledgements} 
L.G. is supported by the Simons Foundation Grant 624733. 
M.V. was supported by NSF grant DMS--2054194.

\section{Countable subadditivity and $\alpha$-quasi-additivitity for operators between Lorentz spaces}

In this section, we show that mild boundedness assumptions   imply countable subadditivity and countable $\alpha$-quasi-additivitity.

To fix notation, we let $(X,\mu)$ and $(Y,\nu)$ be two $\si$-finite measure spaces.  We denote by $ S(X)$ the space of complex-valued simple functions on $X$ and by $\mathscr M(X)$ the space of complex-valued measurable functions on $X$.  We analogously define $S(Y)$ and $\mathscr M(Y)$.

Following \cite{CFA}, we denote by $S_0 (X)$ the subset of $S(X)$ of functions of the form $f_1-f_2+if_3-if_4$, where each $f_j$ has the form 
$
\sum_{k= m }^{n } 2^{-k} \chi_{A_k}
$
where $m <n $ are integers and $A_k$ are subsets of $X$ of finite measure.  
  
\begin{dfn} \label{D:Lorentz}
The {\it decreasing rearrangement} of a function $f\in \mathscr M(X)$ is defined as follows: for any $t\in[0,\infty)$,
\begin{equation}
f^*(t)=\inf \{s>0:\,\, \mu(\{ |f|> s\}) \le t\} .
\end{equation}    
For  $0<p,q\le \infty$, the {\it Lorentz space} $L^{p,q}(X)$ is the space of all complex-valued measurable functions $f$ on $X$ for which the quasi-norm 
\begin{equation*} 
\big\|f\big\|_{L^{p,q}(X)} =\begin{cases}
\left(\displaystyle\int_0^\infty\left(t^\frac{1}{p} f^*(t)\right)^q\,
\dfrac{dt}{t}\right)^\frac{1}{q}
\,\,&\,\,\text{if $q<\infty\!$ ,} \\
\sup\limits_{t>0} \,t^\frac{1}{p} f^*(t)\quad\quad
\,\,&\,\,\text{if $q=\infty\!$ } \end{cases}
\end{equation*}
is finite.
\end{dfn}

Note that $L^{p,p}(X)=L^p(X)$ for all $0<p\leq \infty$, while $L^{p,\infty}(X)$ coincides with the weak $L^p(X)$ space for $0<p<\infty$.

Lorentz spaces satisfy a nesting property; specifically, $L^{p,q}(X)\subset L^{p,r}(X)$ whenever $0<p\leq \infty$ and $0<q<r\leq \infty$.  Indeed, there exists a constant $C(p,q,r)$ depending only on $p,q,r$ so that
\begin{align}\label{nesting}
\big\|f\big\|_{L^{p,q}(X)}\leq C(p,q,r)\big\|f\big\|_{L^{p,r}(X)}.
\end{align}

That the space of simple functions $S(X)$ is dense in $L^{p,q}(X)$ for all $0<p,q<\infty$ was observed already in \cite{Hunt66}.  For a proof that $S_0(X)$ is dense in $L^{p,q}(X)$ for $0<p,q<\infty$ see Proposition 1.4.21 in \cite{CFA}. However, $S(X)$ is \emph{not} dense in $L^{p,\infty}(X)$ for any $0<p\leq\infty$ if $\mu$ has infinite support; see Exercise 1.4.4. in \cite{CFA}.

\begin{dfn}\label{D1}
Let $T$ be a mapping defined on a Lorentz space $L^{p,q}(X)$ and taking values in the space $\mathscr M(Y)$. We say that 
\begin{enumerate}

\item[(a)] $T$ is \emph{subadditive} on $L^{p,q}(X) $ if for all $f,g\in L^{p,q}(X)$ we have 
\[
|T(f+g)|\le |T(f )|+|T( g)| \qq \textup{$\nu$-a.e.}
\]
If in addition $T$ satisfies
\begin{align}\label{homogeneous}
|T(\lambda f)| =|\lambda| |T(f)| \quad\text{for all $\lambda\in \ccc$ and $f\in L^{p,q}(X)$,}
\end{align}
then $T$ is called \emph{sublinear}.  

\item[(b)] $T$ is \emph{countably subadditive} on $L^{p,q}(X)$ if whenever $\sum_{j\in \zzz} f_j$ converges in $L^{p,q}(X)$, we have
\[
\Big|T\Big(\sum_{j\in \zzz} f_j\Big)\Big|\le \sum_{j\in \zzz} |T(f_j )|  \qq \textup{$\nu$-a.e.}
\]

\item[(c)] $T$ is \emph{quasi-additive} on $L^{p,q}(X)$ if there is a constant $K>1$ such that  
for all $f,g\in L^{p,q}(X)$ we have 
\[
|T(f+g)|\le K( |T(f )|+|T( g)|)\qq \textup{$\nu$-a.e.}
\]
If in addition $T$ satisfies \eqref{homogeneous}, then $T$ is called \emph{quasi-linear}.  

\item[(d)] $T$ is \emph{countably $\al$-quasi-additive} on $L^{p,q}(X)$ for some $0<\al \le 1$ if there is a constant $K' \ge 1$ such that whenever $\sum_{j\in \zzz} f_j$ converges in $L^{p,q}(X)$, we have
\[
\Big|T\Big(\sum_{j\in \zzz} f_j\Big)\Big|^\al \le K' \sum_{j\in \zzz} |T(f_j )|^\al \qq \textup{$\nu$-a.e.}
\]
\end{enumerate}
\end{dfn}

Next, we recall the restricted weak-type and weak-type conditions.

\begin{dfn}\label{D2}
We say that a mapping $T$ defined on a subset of $\mathscr M(X)$ containing $S(X)$ and taking values in $\mathscr M(Y )$ is of \emph{restricted weak-type} $(p,q)$ for $0<p<\infty$ and $0<q\le \nf$ if there is a constant $C>0$ so~that 
\begin{equation}\lab{1.4.RWT66}
\big\|T(\chi_A)\big\|_{\wlq(Y)} \le C\, \mu(A)^{1/p}
\end{equation}
for all measurable subsets $A$ of $X$  with finite measure.

We say that $T$ is of \emph{weak-type} $(p,q)$ if there is a constant $C>0$ such that 
\begin{equation}\lab{1.4.RWT66}
\big\|T(f)\big\|_{\wlq(Y)} \le C\, \|f\|_{L^p(X)}
\end{equation}
for all $f\in L^p(X)$.
\end{dfn}

We recall that a sublinear operator $T$ is of restricted weak-type $(p,q)$ with $0<p<\infty$ and $1<q<\infty$ if and only if it admits a bounded extension from $L^{p,1}(X)$ to $L^{q, \infty}(Y)$; see, for example, \cite[Exercise 1.4.7]{CFA}.

As observed in the introduction, countable subadditivity and countable $\alpha$-quasi-additivity are consequences of continuity.  We first observe that the weak-type condition provides a strong form of continuity. 

\begin{prop}\label{P00} Fix $0<p<\infty$ and $0<q\le \nf$.\\[1mm]
(i) A subadditive operator $T$ of weak-type (p,q) is countably subadditive on $L^p(X)$. \\[1mm]
(ii) A quasi-additive operator $T$ of weak-type (p,q) is countably $\al$-quasi-additive on $L^p(X)$.
\end{prop}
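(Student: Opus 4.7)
Fix a series $\sum_{j\in \zzz} f_j$ converging to some $f$ in $L^p(X)$, write $S_N = \sum_{|j|\le N} f_j$ for its partial sums, and let $g_N = f - S_N$ denote the tails, which satisfy $\|g_N\|_{L^p(X)} \to 0$.  The unifying observation behind both parts is that the weak-type bound
\[
\|T(g_N)\|_{\wlq(Y)} \le C \|g_N\|_{L^p(X)} \longrightarrow 0
\]
forces $T(g_N) \to 0$ in $\nu$-measure.  Extracting a subsequence $N_k$ we may therefore assume $T(g_{N_k}) \to 0$ $\nu$-a.e. This is the key input that allows a finite estimate to be upgraded to a countable one.

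For part (i), finitely iterated subadditivity (using only countably many instances, hence valid off a single $\nu$-null set) yields
\[
|T(f)| = |T(S_{N_k} + g_{N_k})| \le |T(S_{N_k})| + |T(g_{N_k})| \le \sum_{|j|\le N_k} |T(f_j)| + |T(g_{N_k})| \q \nu\text{-a.e.}
\]
Sending $k\to \infty$ and using the pointwise vanishing of $T(g_{N_k})$ produces the desired inequality $|T(f)| \le \sum_{j\in\zzz} |T(f_j)|$ $\nu$-a.e.

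For part (ii), the analogous estimate is the pointwise counterpart of \eqref{6}.  Choose $\alpha \in (0,1]$ with $(2K)^\alpha = 2$, apply quasi-additivity once to split $f = S_{N_k} + g_{N_k}$ (combined with the elementary bound $(a+b)^\alpha \le a^\alpha + b^\alpha$ for non-negative $a,b$), and iterate the Aoki--Rolewicz argument pointwise on the finite sum $S_{N_k}$, all off a single null set.  This delivers
\[
|T(f)|^\alpha \le C_K \sum_{|j|\le N_k} |T(f_j)|^\alpha + C_K |T(g_{N_k})|^\alpha \q \nu\text{-a.e.}
\]
for a constant $C_K$ depending only on $K$ and $\alpha$.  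Letting $k\to\infty$ yields the countable $\alpha$-quasi-additivity with $K' = C_K$.

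The only delicate point is the passage from $\wlq$-convergence to pointwise a.e.\ convergence of a subsequence of $T(g_N)$; everything else is bookkeeping based on (quasi-)subadditivity and the fact that a countable collection of null sets remains null.  It is worth noting that no density of simple functions is invoked anywhere, which is exactly the feature needed to make the argument work on $\wlq$-type spaces in the applications to Hunt's interpolation theorem.
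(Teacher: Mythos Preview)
Your proposal is correct and follows essentially the same route as the paper: both arguments use the weak-type bound to deduce that $T$ applied to the tails converges to zero in $L^{q,\infty}(Y)$, hence in measure, then extract a $\nu$-a.e.\ convergent subsequence and pass to the limit in the pointwise (quasi-)subadditivity inequality obtained via Aoki--Rolewicz. The only cosmetic difference is that the paper applies Aoki--Rolewicz in one shot to the finite collection $\{f_j:|j|\le N\}\cup\{g_N\}$ to obtain the constant $4$ directly, whereas you first split off $g_{N_k}$ by a single quasi-additivity step and then invoke Aoki--Rolewicz on $S_{N_k}$, yielding a constant $C_K$ instead; this has no bearing on the argument.
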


\begin{proof}
We prove (ii) as the proof of (i) is easier. 

Let  $\sum_{j\in \zzz} f_j$ be a series converging in $L^p(X)$. By the Aoki--Rolewicz theorem (with $(2K)^\al =2$), we obtain 
\bee\lab{8}
\Big| T\Big(\sum_{j\in \zzz} f_j \Big) \Big| ^\al \le 4\sum_{|j|\le N}\big| T(f_j ) \big| ^\al 
+ 4\, \Big| T\Big(\sum_{|j|>N} f_j \Big) \Big| ^\al \q\textup{$\nu$-a.e.} 
\eee
for any $N\in \zp$.

To conclude the proof, it thus suffices to show that $T\big(\sum_{|j|>N} f_j \big)$ converges to zero $\nu$-a.e. as $N\to \nf$. As $T$ is of weak-type $(p,q)$, we have 
$$
\Bigl\|T\big(\sum_{|j|>N} f_j \big) \Bigr\|_{\wlq(Y)} \leq C \Bigl\| \sum_{|j|>N} f_j \Bigr\|_{L^p(X)}
$$
for some fixed $C>0$.  As the right-hand side converges to zero as $N\to\nf$, we conclude that $T\big(\sum_{|j|>N} f_j \big)$ converges in measure to zero.  Consequently, there is a subsequence $N_k\to \infty$ such that $T\big(\sum_{|j|>N_k} f_j \big)\to 0$ $\nu$-a.e. Setting $N=N_k$ in \eqref{8} and letting $k\to \nf$ we obtain that $T$ is countably $\al$-quasi-additive on $L^p(X)$.
\end{proof}


Next we discuss a weaker form of continuity that is still sufficient to imply countable subadditivity in the setting of Lorentz spaces. 
 
\begin{prop}\lab{P0}
Let $0<p < \nf$, $0<q\le \nf$, and $0<r<\min \{1,q\}$. Let $T$ be a subadditive operator defined on $S_0(X)$ with values in  $\mathscr M(Y)$ which is of restricted weak-type 
$(p,q)$. Then $|T|$ has a unique countably subadditive extension on $L^{p, r}(X)$.  
\end{prop}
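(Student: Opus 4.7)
The strategy is to mirror the proof of Proposition \ref{P00}: first establish continuity-at-zero for $T$ on $S_0(X)$, then extend $|T|$ by density to $L^{p,r}(X)$, and finally deduce countable subadditivity from finite subadditivity plus tail control. The new wrinkle is that boundedness is only available on characteristic functions, so the extra work is in the first step.

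For the a priori bound, I would aim to prove
\[
\|T(f)\|_{L^{q,\infty}(Y)}\le C\,\|f\|_{L^{p,r}(X)}\qquad\text{for all }f\in S_0(X).
\]
Writing $f=f_1-f_2+if_3-if_4$ with $f_j=\sum_{k=m}^n 2^{-k}\chi_{A_{j,k}}$ and applying subadditivity of $T$ iteratively, one obtains a pointwise domination of $|T(f)|$ by a finite sum of terms of the form $|T(\sigma 2^{-k}\chi_A)|$ with $\sigma\in\{\pm 1,\pm i\}$. The restricted weak-type hypothesis gives $\|T(\chi_A)\|_{L^{q,\infty}(Y)}\le C\mu(A)^{1/p}$, and since $r<\min\{1,q\}$ the Aoki--Rolewicz theorem supplies an $r$-subadditive equivalent quasi-norm on $L^{q,\infty}(Y)$. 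Combining these with the elementary identity $\|f\|_{L^{p,r}(X)}^r\asymp\sum_l 2^{-rk_l}\mu(A_l)^{r/p}$ for disjointly-supported dyadic simple functions should reassemble the desired estimate.

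Next, I would use the density of $S_0(X)$ in $L^{p,r}(X)$ (valid since $0<p<\infty$ and $0<r<\infty$) to extend $|T|$ by continuity. For $f\in L^{p,r}(X)$, pick $(f_n)\subset S_0(X)$ with $f_n\to f$ in $L^{p,r}$; subadditivity implies
\[
\big|\,|T(f_n)|-|T(f_m)|\,\big|\le |T(f_n-f_m)|+|T(f_m-f_n)|\qquad\nu\textup{-a.e.},
\]
and the a priori bound of the first step forces the right-hand side to tend to zero in $L^{q,\infty}(Y)$. Hence $(|T(f_n)|)$ is Cauchy in $L^{q,\infty}(Y)$, and, along an a.e.\ convergent subsequence, defines the unique continuous extension $|T|(f)$; uniqueness of any countably subadditive extension follows from continuity and density. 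Countable subadditivity itself is then proved as in Proposition \ref{P00}: for $\sum_j f_j=f$ convergent in $L^{p,r}$, the finite subadditivity inequality
\[
\Big|T\Big(\sum_{|j|\le N}f_j\Big)\Big|\le\sum_{|j|\le N}|T(f_j)|\qquad\nu\textup{-a.e.}
\]
passes to the limit as $N\to\infty$, the left-hand side converging to $|T|(f)$ by continuity of the extension and the right-hand side to $\sum_j|T(f_j)|$ by monotone convergence.

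The main obstacle is the a priori boundedness. Because $T$ is only subadditive and not sublinear, one cannot reduce $\|T(2^{-k}\chi_A)\|_{L^{q,\infty}}$ to $2^{-k}\|T(\chi_A)\|_{L^{q,\infty}}$ directly; subadditivity alone yields only the reverse inequality $|T(2^{-k}\chi_A)|\ge 2^{-k}|T(\chi_A)|$. The hypothesis $r<\min\{1,q\}$ is precisely what rescues the argument: it allows an $r$-subadditive reassembly in $L^{q,\infty}$ that absorbs the dyadic scaling factors collectively across the decomposition of $f$, rather than attempting to bound each scaled characteristic-function piece individually.
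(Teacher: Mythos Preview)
Your overall plan mirrors the paper's proof: establish an a priori bound $\|T(\varphi)\|_{L^{q,\infty}}\lesssim\|\varphi\|_{L^{p,r}}$ on $S_0(X)$, extend $|T|$ to $L^{p,r}(X)$ by density, and deduce countable subadditivity from tail control. The paper does exactly this, invoking Lemma~1.4.20 of \cite{CFA} for the a priori bound rather than proving it; your density-extension and countable-subadditivity steps are essentially identical to the paper's (indeed, your Cauchy estimate $\big|\,|T(f_n)|-|T(f_m)|\,\big|\le|T(f_n-f_m)|+|T(f_m-f_n)|$ is more careful than the paper's one-sided version, since $T$ is not assumed to satisfy $|T(-g)|=|T(g)|$).

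The genuine gap is in your sketch of the a priori bound. You correctly isolate the obstacle: subadditivity gives only $|T(2^{-k}\chi_A)|\ge 2^{-k}|T(\chi_A)|$ for $k\ge 0$, so one cannot simply split $f\in S_0(X)$ into scaled characteristic pieces and apply the restricted weak-type estimate termwise. But your proposed resolution---that the $r$-subadditive equivalent quasi-norm on $L^{q,\infty}$ ``absorbs the dyadic scaling factors collectively''---does not do the job. That $r$-subadditivity converts the pointwise bound $|T(f)|\le\sum_k|T(\sigma_k 2^{-k}\chi_{A_k})|$ into
\[
\|T(f)\|_{L^{q,\infty}}^r\lesssim\sum_k\|T(\sigma_k 2^{-k}\chi_{A_k})\|_{L^{q,\infty}}^r,
\]
yet each summand on the right still requires an upper bound, and none is available from the hypotheses. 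Nothing ``collective'' happens: the terms are already decoupled, and the missing ingredient is precisely the termwise estimate you acknowledged cannot be obtained. Moreover, your identity $\|f\|_{L^{p,r}}^r\asymp\sum_l 2^{-rk_l}\mu(A_l)^{r/p}$ presumes disjoint supports, which the definition of $S_0(X)$ does not impose. The actual proof of Lemma~1.4.20 exploits the specific dyadic-level-set structure of $S_0$ functions in a way your sketch does not capture; since the paper outsources this step entirely, you would need to consult \cite{CFA} to see the mechanism.
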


\begin{proof}
By Lemma 1.4.20 in \cite{CFA} there is a constant $C>0$ such that 
\bee\lab{9}
\| T(\vp)\|_{L^{q,\nf} (Y)} \le C \| \vp\|_{L^{p, r} (X)}
\eee
for all functions $\vp$ in the subspace $S_0(X)$ of $L^{p, r} (X)$. 

Next, we use the density of $S_0(X)$ in $L^{p, r} (X)$ together with \eqref{9} to define an extension of $|T|$ on $L^{p, r} (X)$.  Given a  sequence $\{\vp_j\}_j\subset S_0(X)$ converging to a given $f\in L^{p, r} (X)$ in the norm of this space, we claim that $|T(\vp_j)|$ converges to $|T(f)|$ in $L^{q,\nf} (Y)$. To see this, we note that in view of subadditivity we have 
\[
\big| |T(\vp_{j'} )| - |T(\vp_j)| \big| \le |T(\vp_{j'}-\vp_j)|.
\]
Invoking \eqref{9}, this yields that that sequence $\{|T(\vp_j)|\}_j$ is Cauchy in the complete metric space $L^{q,\nf}(Y)$ and thus it has a limit in this space, which we denote by $|T(f)|^*$.
Moreover,
\begin{align}\label{bdd}
\| |T(f)|^*\|_{L^{q,\nf} (Y)} &=\lim_{j\to \infty} \| T(\vp_j)\|_{L^{q,\nf} (Y)}\notag\\
&\leq C \lim_{j\to \infty}\| \vp_j\|_{L^{p, r} (X)} = C \| f\|_{L^{p, r} (X)}.
\end{align}

Next we observe that the object $|T(f)|^*$ does not depend on the particular sequence $\{\vp_j\}_j$ used to approximate $f$. Indeed, if $\{\psi_j\}_j$ is another sequence in $S_0(X)$ 
converging to $f$, then 
\[
\big| |T(\psi_{j} )| - |T(\vp_j)| \big| \le |T(\psi_{j}-\vp_j)|.
\]
Together with \eqref{9} this shows that $|T(\psi_{j} )|$ has the same limit as $|T(\vp_{j} )|$ in $L^{q,\infty}(Y)$.  Therefore, $|T(f)|^*$ is well defined for each $f\in L^{p,r}(X)$. 

This allows us to define an extension $|T|^*$ of $|T|$  on $L^{p, r}(X)$ by setting $|T|^*(f) = |T(f)|^*$.  Moreover, in view of \eqref{bdd}, this extension satisfies
\begin{align}\label{bdd'}
\| |T(f)|^*\|_{L^{q,\nf} (Y)}\leq  C \| f\|_{L^{p, r} (X)}.
\end{align}

This bound provides the required continuity needed to show that the extension $|T|^*$ is a countably subadditive operator on $L^{p, r}(X)$.  Indeed, arguing as in the proof of Proposition~\ref{P00}, to derive countable subadditivity we need to control  $| T (\sum_{|j|>N} f_j)|^*$ whenever $\sum_{|j|>N} f_j\to 0 $ in $L^{p, r} (X)$. The estimate \eqref{bdd'} implies that $\| |T (\sum_{|j|>N} f_j)|^*\|_{L^{q,\nf}(Y)}$ tends to zero as $N\to \nf$, so there is a subsequence $N_k\to \infty$ such that $|T (\sum_{|j|>N_k} f_j)|^*$ tends to zero $\nu$-a.e. as $k\to \infty$. This yields the countable subadditivity of the extension $|T|^*$.
\end{proof} 
 
\begin{rmk}
Proposition~\ref{P0} has a natural extension in the case $T$ is a quasi-additive operator defined on $S_0(X)$ with values in $\mathscr M(Y)$ which is of restricted weak-type $(p,q)$ for $0<p < \nf$ and $0<q\le \nf$.  In this setting, Lemma 1.4.20 in \cite{CFA} establishes the estimate \eqref{9} provided
$$
0<r<\min \bigl\{q, \tfrac{\log 2}{\log(2K)}\bigr\},
$$
where $K$ denotes the quasi-additivity constant of $T$.  The arguments in Proposition~\ref{P0} then show that $|T|$ has a unique countably $\alpha$-quasi-additive extension on $L^{p, r}(X)$ where $\alpha$ is chosen such that $(2K)^\alpha=2$.
\end{rmk}

Proposition~\ref{P0} shows that operators that are well-behaved on simple functions have extensions that are likewise well-behaved. However, the extension provided by Proposition~\ref{P0} need not be the only extension of the operator, as the following example demonstrates: Let $B$ denote a vector space basis of $S_0(X)$ and let $B'$ denote an extension of $B$ to a basis of $L^{p,r}(X)$.  Given a function $h\neq 0$, that may or may not belong to $L^{q, \infty}(Y)$, we define
\begin{equation*}
T(f) =\begin{cases} 0, &\quad\text{if $f\in B$}\\ h, &\quad\text{if $f\in B'\setminus B$},\end{cases}
\end{equation*}
and extend $T$ to $S_0(X)$ and $L^{p,r}(X)$ as a linear operator. Then the extension provided by Proposition~\ref{P0} is the zero operator on $L^{p,r} (X)$ and so does not coincide with $T$ on $L^{p,r}(X)$.


In fact, the restricted weak-type property is most useful when it is additionally known that the operator is also bounded on some other space, in which case there is no ambiguity over the extension.

Our main result in this section demonstrates that if two estimates are known (such as in the setting of the Hunt or Marcinkiewicz interpolation theorems, for example), then subadditivity (respectively, quasi-additivity) properties of the operator necessarily imply countable subadditivity (respectively, countable $\al$-quasi-additivity) on interpolation spaces.

\begin{prop}\label{P1}
Let $0<p_0 <p<  p_1 \le \nf$ and $0<q_0, q_1\le \nf$. Let $T$ be a quasi-additive (respectively, subadditive) operator defined on $L^{p_0,1}(X)+L^{p_1,1}(X)$ with values in $\mathscr M(Y)$. Assume that $T$ satisfies
\begin{align}
\|T(f)\|_{L^{q_1,\infty}(Y)} \leq C_0 \|f\|_{L^{p_1,1}(X)} \label{hyp1},\\
\|T(f)\|_{L^{q_2,\infty}(Y)} \leq C_1 \|f\|_{L^{p_2,1}(X)}\label{hyp2},
\end{align}
for fixed constants positive $C_0,C_1$. Then $T$ is countably $\al$-quasi-additive (respectively, countably subadditive) on $L^{p,r}(X)$ for any $0< r\le \infty $, where $\al$ is related to the quasi-addivity constant $K$ via $(2K)^\al=2$. 
\end{prop}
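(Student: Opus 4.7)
The plan is to mirror the strategy of Proposition~\ref{P00} by first establishing a norm bound of the form $\|T(f)\|_{L^{q,r}(Y)} \le C\|f\|_{L^{p,r}(X)}$ for an appropriate target exponent $q$, which supplies the continuity at zero needed to pass convergence of the tails of a series in $L^{p,r}(X)$ through $T$, and then concluding via the Aoki--Rolewicz inequality \eqref{8} combined with a subsequence extraction. A preliminary observation is that $L^{p,r}(X) \subset L^{p_0,1}(X) + L^{p_1,1}(X)$ whenever $p_0 < p < p_1$ and $0<r\le \infty$ --- this follows from the splitting $f = f\chi_{\{|f|>1\}} + f\chi_{\{|f|\le 1\}}$ together with \eqref{nesting} --- so that $T$ is indeed defined on all of $L^{p,r}(X)$.

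The core step is a Marcinkiewicz-style interpolation argument that yields $\|T(f)\|_{L^{q,r}(Y)} \le C\|f\|_{L^{p,r}(X)}$, where $q$ is determined from $(p_0,q_1)$ and $(p_1,q_2)$ by the usual convexity relation. For each level $\la>0$, decompose $f = f^{>\la} + f^{\le \la}$ with $f^{>\la} := f\chi_{\{|f|>\la\}}$. Quasi-additivity gives
\[
\nu\{|T(f)|>2K\tau\} \le \nu\{|T(f^{>\la})|>\tau\} + \nu\{|T(f^{\le\la})|>\tau\},
\]
while the weak-type hypotheses \eqref{hyp1} and \eqref{hyp2} bound the right-hand side by multiples of $\|f^{>\la}\|_{L^{p_0,1}(X)}^{q_1}\tau^{-q_1}$ and $\|f^{\le\la}\|_{L^{p_1,1}(X)}^{q_2}\tau^{-q_2}$ respectively. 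Expressing these Lorentz norms through the decreasing rearrangement $f^*$ and optimizing $\la$ as a function of $\tau$ produces the desired bound for every $0<r\le \infty$; the endpoint $r=\infty$ relies on the fact that convergence in the $L^{q,\infty}(Y)$ quasi-norm implies convergence in $\nu$-measure.

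With this bound in hand, the argument closes quickly. Setting $g_N := \sum_{|j|>N} f_j$, the assumption $\|g_N\|_{L^{p,r}(X)}\to 0$ forces $\|T(g_N)\|_{L^{q,r}(Y)}\to 0$, whence $T(g_N)\to 0$ in $\nu$-measure. Extracting a subsequence $N_k\to\nf$ along which $T(g_{N_k})\to 0$ $\nu$-a.e.\ and inserting $N=N_k$ into \eqref{8} before letting $k\to\nf$ yields countable $\al$-quasi-additivity with $(2K)^\al=2$. The subadditive case is the identical argument with $K=1$, $\al=1$, and the straightforward subadditive analogue of \eqref{8}. The main technical obstacle is the Marcinkiewicz-style interpolation step in the merely quasi-additive setting, particularly at the endpoint $r=\infty$, where one needs the weak-type endpoint $L^{p,\infty}\to L^{q,\infty}$; once continuity at zero is secured, everything else is a direct transcription of the proof of Proposition~\ref{P00}.
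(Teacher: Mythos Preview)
Your argument is essentially correct but takes a substantially different and longer route than the paper's.  You first prove a Hunt-type interpolation bound $\|T(f)\|_{L^{q,r}(Y)} \lesssim \|f\|_{L^{p,r}(X)}$ via level-set decomposition and optimization, and then use it to supply continuity at zero.  The paper bypasses the interpolation step entirely.  It applies the elementary splitting $F = F\chi_{\{|F|>F^*(1)\}} + F\chi_{\{|F|\le F^*(1)\}}$ to the tail $F_N = \sum_{|j|>N} f_j$, records the estimate
\[
\|(F_N)_0\|_{L^{p_0,1}} + \|(F_N)_1\|_{L^{p_1,1}} \le C\,\|F_N\|_{L^{p,\infty}} \le C\,\|F_N\|_{L^{p,r}} \longrightarrow 0,
\]
and then invokes the two endpoint hypotheses \emph{separately}: $T\bigl((F_N)_0\bigr) \to 0$ in $L^{q_0,\infty}(Y)$ and $T\bigl((F_N)_1\bigr) \to 0$ in $L^{q_1,\infty}(Y)$.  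Quasi-additivity gives $|T(F_N)| \le K\bigl(|T((F_N)_0)| + |T((F_N)_1)|\bigr)$ pointwise, so $T(F_N) \to 0$ in measure along a subsequence, and the Aoki--Rolewicz step closes exactly as in your final paragraph.

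The paper's route is much shorter, and it is better aligned with the role of the proposition: Proposition~\ref{P1} is meant to be the tool that \emph{enables} a clean proof of Hunt's theorem on all of $L^{p,r}$ (including $r=\infty$), so embedding a proof of Hunt's theorem inside its proof is circular in spirit even if not in logic.  One technical caveat on your side: the strong-form bound $\|T(f)\|_{L^{q,r}} \lesssim \|f\|_{L^{p,r}}$ you aim for is more than you need (an $L^{q,\infty}$ target already suffices for convergence in measure) and is not available in general when $q_0 = q_1$, whereas the paper's argument imposes no relation between the $q_i$.
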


\begin{proof}
We prove the claim in the case $T$ is quasi-additive, as the subadditive case is included.

Given $f\in L^{p,r}(X)$, we may decompose it as 
$$
f=f_0+f_1 \quad\text{with  $f_0=f\chi_{\{|f|>f^*(1)\}}$.}
$$
Then
\begin{equation*}
f_0^*(t)\leq f^*(t) \chi_{(0,1)}(t) \qquad\text{and} \qquad f_1^*(t)\leq \begin{cases} f^*(1),\quad \text{if $t<1$}\\  f^*(t), \quad \text{if $t\geq 1$}\end{cases}
\end{equation*}
which yields
 \begin{equation}\label{10}
 \| f_0\|_{L^{p_0,1}}+  \| f_1\|_{L^{p_1,1}} \le C(p,p_0,p_1) \|f\|_{L^{p,\nf}} \le  C(p,p_0,p_1) \|f\|_{L^{p,r}},
 \end{equation}
The last inequality above follows from the nesting property \eqref{nesting} of Lorentz spaces.

As \eqref{10} shows, the space $L^{p,r}(X)$ is contained in $L^{p_0,1}(X)+L^{p_1,1}(X)$.  Therefore, given a sequence $f_j$ in $L^{p,r}(X)$, we may decompose each $f_j=g_j+h_j$ with $g_j\in L^{p_0,1}(X)$ and $h_j \in L^{p_1,1}(X)$.  If the series $\sum_j f_j$ converges in $L^{p,r}(X)$,  then \eqref{10} shows that $\sum_j g_j$ converges in $L^{p_0,1}(X)$ and $\sum_j h_j$ converges in $L^{p_1,1}(X)$.  

The quasi-additivity of $T$ yields 
 \begin{equation}\label{11}
 \Big| T\big( \sum_{|j|>N} f_j\big) \Big| \le K 
  \Big| T\big( \sum_{|j|>N} g_j\big) \Big| + K \Big| T\big( \sum_{|j|>N} h_j\big) \Big|.
 \end{equation}
 By \eqref{hyp1}, \eqref{hyp2}, and the convergence of $\sum_j g_j$ in $L^{p_0,1}(X)$ and that of $\sum_j h_j$ in $L^{p_1,1}(X)$, we have that $T\big( \sum_{|j|>N} g_j\big)$ tends to zero in $L^{q_0,\nf}(Y) $ and $T\big( \sum_{|j|>N} h_j\big)$ tends to zero in $L^{q_1,\nf}(Y)$. Passing to a subsequence $N_k\to \infty$, we may thus guarantee that $T\big( \sum_{|j|>N_k} g_j\big)$ tends to zero $\nu$-a.e. and $T\big( \sum_{|j|>N_{k}} h_j\big)$ tends to zero $\nu$-a.e. as $k\to \nf$.  By \eqref{11}, this shows that $T\big( \sum_{|j|>N_{k}} f_j\big)$ tends to zero $\nu$-a.e. as $k\to \nf$.
 
To continue, we employ the Aoki--Rolewicz theorem to deduce 
 \begin{equation}\label{12}
 \Big| T\big( \sum_{|j|\le N_k} f_j\big) \Big|^\al \le 4  \sum_{|j|\le N_k} \big| T ( f_j ) \big|^\al  + 4  \Big| T\big( \sum_{|j|> N_k} f_j\big) \Big|^\al 
 \q \textup{$\nu $-a.e.}
 \end{equation}
Sending $\kappa\to\infty$, this yields the countable $\al$-quasi-additivity of $T$.
\end{proof}

Note that the boundedness properties \eqref{hyp1} and \eqref{hyp2} of $T$ are the standard hypotheses in Hunt's interpolation theorem \cite{Hunt64, Hunt66}.  Moreover, by the nesting property \eqref{nesting}, if $T$ is of weak-type $(p_0,q_0)$ and of weak-type $(p_1,q_1)$, then it also satisfies \eqref{hyp1} and \eqref{hyp2}; weak-type bounds are the standard hypotheses in the Marcinkiewicz interpolation theorem.

Proposition~\ref{P1} shows that the colloquial formulation of the Hunt interpolation theorem holds true --- there is no need to discuss simple functions.  Indeed, Proposition~\ref{P1} guarantees that, under the traditional hypotheses of the Hunt interpolation theorem, the operator $T$ extends naturally to the entirety of $L^{p,r}(X)$, even when $r=\infty$. By comparison, traditional treatments guarantee only that the interpolation bounds hold for simple functions and consequently, that the operator $T$ extends to a bounded operator on $L^{p,r}$ only when simple functions are dense therein.  As discussed in the introduction, this excludes the weak $L^p$ spaces.

\section{Applications}\label{S:Applications}

In this section, we look at three examples where countable subadditivity plays a critical role. 

\subsection{Yano's extrapolation}
For our first application we prove a slight extension of Yano's extrapolation theorem \cite{Y}. 

\begin{thm}
Let $(X,\mu )$ and $(Y,\nu)$ be finite measure spaces and fix $1<p_*<\nf$ and positive constants $A, \al$.  Let $T$ be a sublinear operator that maps $L^{p,1}(X)$ to $L^{p, \infty}(Y)$ for every $1<p < p_* $ with norm bounded by $A (p-1)^{-\al}$. Then for all $f\in \cup_{1<p< p_*} L^{p,1}(X)$ we have
\begin{equation}\label{yano}
\int_Y |T(f)|\,d\nu \le   A \, C_Y \bigg[\int_X |f|(\log_2^+ |f|)^\al \,d\mu +C_{X,\al, p_*}\bigg]\, ,
\end{equation}
where $C_{X,\al, p_*}$ and $C_Y$ are constants depending on the indicated parameters. 
\end{thm}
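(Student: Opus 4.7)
We follow the classical Yano dyadic layer decomposition; the key input is that Proposition~\ref{P1} supplies the countable subadditivity needed to make the approach rigorous, irrespective of density of simple functions in the ambient Lorentz spaces.

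Assume without loss of generality that $f \ge 0$, and that $\int_X f (\log_2^+ f)^\al \, d\mu < \infty$ (else the right-hand side is infinite). Fix $p_0 \in (1, p_*)$ with $f \in L^{p_0, 1}(X)$, and write
\[
f = g + \sum_{k \ge 0} f_k, \qquad g := f\chi_{\{f \le 1\}}, \qquad f_k := f\chi_{E_k}, \qquad E_k := \{2^k < f \le 2^{k+1}\}.
\]
Choosing auxiliary exponents $1 < p < p_0 < q < p_*$, the hypothesis supplies the two weak-type bounds demanded by Proposition~\ref{P1}, so $T$ is countably subadditive on $L^{p_0, 1}(X)$. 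Since the $f_k$ have pairwise disjoint supports, $\sum_k f_k$ converges to $f - g$ in $L^{p_0, 1}(X)$, and sublinearity combined with countable subadditivity yields
\[
|T(f)| \le |T(g)| + \sum_{k \ge 0} |T(f_k)| \qquad \textup{$\nu$-a.e.}
\]

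The bounded part is handled at any fixed auxiliary exponent $p_{**} \in (1, p_*)$: because $g \le 1$ and $\mu(X) < \infty$, the quasi-norm $\|g\|_{L^{p_{**}, 1}(X)}$ is controlled by a constant depending on $p_*$ and $\mu(X)$, and combining the hypothesis with the standard embedding $L^{p_{**}, \infty}(Y) \hookrightarrow L^1(Y)$ (valid since $\nu(Y) < \infty$) produces $\|T(g)\|_{L^1(Y)} \le A\, C_Y\, C_{X, \al, p_*}$, absorbed into the additive constant. Each layer $f_k$ is instead treated at the layer-dependent exponent $p_k := 1 + \frac{1}{k+1}$. The Chebyshev-type estimate $\mu(E_k) \le 2^{-k}\int_{E_k} f \, d\mu =: 2^{-k} I_k$ refines the naive bound $\|f_k\|_{L^{p_k, 1}} \le 2^{k+1} p_k \mu(E_k)^{1/p_k}$ into $\|f_k\|_{L^{p_k, 1}} \lesssim I_k^{1/p_k}$; chaining with the hypothesis and the embedding $L^{p_k, \infty}(Y) \hookrightarrow L^1(Y)$ yields
\[
\|T(f_k)\|_{L^1(Y)} \lesssim A\, C_Y\, \frac{p_k}{p_k - 1}\, (p_k - 1)^{-\al}\, I_k^{1/p_k}.
\]
Summing over $k$ and treating $I_k^{1/p_k}$ by case analysis (bounded by $e\,I_k$ when $I_k \ge e^{-k}$, and by a rapidly decaying $e^{-(k-1)}$ otherwise) controls the series in terms of $\int_X f (\log_2^+ f)^\al \, d\mu$ plus a convergent numerical remainder, since on $E_k$ the factor $(\log_2^+ f)^\al$ is comparable to $k^\al$.

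The main obstacle is the careful bookkeeping of the $k$-dependent constants produced by the layer estimates. Both the operator-norm blowup $(p_k - 1)^{-\al}$ and the $L^{p_k, \infty} \hookrightarrow L^1$ conversion factor $p_k/(p_k - 1)$ grow as $p_k \to 1$, and must be offset by the smallness of $\|f_k\|_{L^{p_k, 1}(X)}$ guaranteed by concentration of $f_k$ on the small set $E_k$. The Chebyshev-type identity $\mu(E_k) \le 2^{-k} I_k$, converting a level-set measure into a fractional power of an $L^1$ integral, is the pivotal technical tool: without it, the naive estimate through $\mu(E_k)^{1/p_k}$ would fail to produce a summable series matching the logarithmic weight $\al$ on the right-hand side.
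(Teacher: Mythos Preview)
Your overall strategy is the same as the paper's: dyadic level-set decomposition $f=g+\sum_k f_k$, countable subadditivity supplied by Proposition~\ref{P1}, and layer-dependent exponents $p_k\downarrow 1$.  Your case analysis on $I_k^{1/p_k}$ plays the same role as the paper's application of Young's inequality to $\mu(S_k)^{k/(k+1)}$; these are equivalent devices.  The Chebyshev refinement $\mu(E_k)\le 2^{-k}I_k$ is also fine and leads to the same place.

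There is, however, a genuine gap in your final summation.  You (correctly) record the conversion factor $p_k/(p_k-1)$ coming from the embedding $L^{p_k,\infty}(Y)\hookrightarrow L^1(Y)$.  With $p_k=1+\tfrac{1}{k+1}$ this factor is $\sim k$, while $(p_k-1)^{-\alpha}\sim k^{\alpha}$, so the displayed bound on $\|T(f_k)\|_{L^1(Y)}$ is of order $k^{\alpha+1}I_k^{1/p_k}$.  After your case analysis the main contribution is $\sum_k k^{\alpha+1}I_k$; since $(\log_2^+ f)^\alpha\sim k^\alpha$ on $E_k$, this is comparable to $\int_X f(\log_2^+ f)^{\alpha+1}\,d\mu$, not to $\int_X f(\log_2^+ f)^{\alpha}\,d\mu$ as you assert.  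The very factor you flag in your closing paragraph as ``the main obstacle'' is the one that is not actually absorbed.

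For comparison, the paper's computation passes from $\|1\|_{L^{p_k',1}(Y)}$ directly to $\nu(Y)^{1/p_k'}$, i.e.\ it does \emph{not} carry the factor $p_k'=p_k/(p_k-1)\sim k$ that you include (and that the Lorentz norm of Definition~\ref{D:Lorentz} actually produces, since $\|\chi_E\|_{L^{q,1}}=q\,\mu(E)^{1/q}$).  So your more scrupulous bookkeeping has surfaced a tension that the paper's write-up glosses over; but as your argument stands, the series you obtain is only shown to be dominated by the $L\log^{\alpha+1}L$ quantity, not the claimed $L\log^{\alpha}L$ one.
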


\begin{proof} 
By Proposition~\ref{P1}, $T$ is countably subadditive on $L^{p,1}(X)$ for all $1<p<p_*$.

Given $f\in L^{p,1}(X)$ for some $1<p< p_*$, we decompose
\[
f=\sum_{k=0}^\nf f\chi_{S_k} \, ,
\]
where $S_0=\{|f|<2\}$ and $S_k=\{ 2^k \le |f|<2^{k+1}\}$ for $k\ge 1$.  That the series representation of $f$ converges in $L^{p,1}(X)$ follows from the dominated convergence theorem.

Let $k_0\geq 1$ be such that $ 1+ \frac{1}k< p_*$ for all $k\geq k_0$.  Let $p_k= \frac{k+1}{k}$ whenever $k\geq k_0$.  

By the countable subadditivity property of $T$ and H\"older's inequality in Lorentz spaces, we may estimate
\begin{align*}
&\int_Y |T(f)|\,d\nu\\
&\le \sum_{k\geq 0} \int_Y |T(f\chi_{S_k})|\,d\nu \\
&\le \!\!\sum_{0\leq k< k_0} \!\!\|T(f\chi_{S_k})\|_{L^{p,\infty}(Y)}\|1\|_{L^{p',1}(Y)}+\sum_{k\geq k_0}  \|T(f\chi_{S_k})\|_{L^{p_k, \infty}(Y)}\|1\|_{L^{p_k',1}(Y)} \\
&\le \!\frac A{ (p-1)^{\alpha}} \!\!\sum_{0\leq k< k_0}\!\!\! \|f\chi_{S_k}\|_{L^{p,1}(X)} \nu(Y)^{\frac1{p'}}+\!\sum_{k\geq k_0} \!\frac A{ (p_k-1)^{\alpha}}\|f\chi_{S_k}\|_{L^{p_k,1}(X)} \nu(Y)^{\frac1{p_k'}} \\
&\le A C_{Y}\Bigl[ C_{X, \alpha, p_*} + \sum_{k\geq k_0} 2^{k+1} k^\alpha \mu(S_k)^{\frac{k}{k+1}}\Bigr].
\end{align*}
Claim \eqref{yano} follows from this and an application of Young's inequality:
$$
\mu(S_k)^{\frac{k}{k+1}} \leq \tfrac{k}{k+1} \, 4\mu(S_k) + \tfrac{1}{k+1} \, 4^{-k}\leq 4\mu(S_k) +  4^{-k}.
$$
Summing over $k\ge k_0$ yields the claimed assertion.
\end{proof}

\bigskip

\subsection{The Calder\'on--Zygmund theorem}

Our next application concerns singular integral operators of Calder\'on--Zygmund type.  One of the central themes in the treatment of such operators is that knowledge of the boundedness of the operator from $L^r(\rn)$ to $L^r(\rn)$ for some $1<r<\infty$, together with further regularity conditions satisfied by the kernel, guarantee that the operator is of weak-type $(1,1)$.  An application of the Marcinkiewicz interpolation theorem then allows one to conclude that the operator admits a bounded extension from $L^p(\rn)$ to $L^p(\rn)$ for all $1<p\leq r$; see \cite{CFA, Stein}.  Here we   discuss an analogous statement under the milder assumption that $T$ is bounded from $L^{r,1}(\rn)$ to $L^{r,\infty}(\rn)$ for some fixed $1<r<\infty$.

\begin{thm}
Suppose that the singular integral operator $T$ is associated with a kernel $K(x,y)$ in the sense that
\begin{align}\label{rep}
T(f)(x) = \int_{\rn} K(x,y) f(y) \, dy \qqq \text{for $x\notin \textup{supp } f$}, 
\end{align}
whenever $f$ is compactly supported.  Assume that there exists two constants $A,A'>0$ and $1<r<\infty$ so that\\[1mm]
\noindent (i) $|K(x,y)|\le A \, |x-y|^{-n}$ uniformly for $x\neq y$,\\[1mm]
\noindent(ii) $\int_{|x-y|\ge 2|y-y'|} |K(x,y)-K(x,y')|\, dx \le A'$ uniformly for $y, y'\in \R^d$,\\[1mm]
\noindent (iii) $T$ is bounded from $L^{r,1}(\rn)$ to $L^{r,\nf}(\rn)$.

Then $T$ admits an extension that is of weak-type $(1,1)$ with a bound proportional to 
$A'+\|T\|_{L^{r,1} \to L^{r,\nf}}$.
\end{thm}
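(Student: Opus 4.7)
The plan is to run the classical Calder\'on--Zygmund decomposition argument, modified to accommodate the weaker hypothesis that $T$ is only bounded from $L^{r,1}(\rn)$ to $\wlr(\rn)$ rather than from $\lr(\rn)$ to $\lr(\rn)$. I first establish the weak-type $(1,1)$ bound on the dense subclass of compactly supported $f\in L^1(\rn)$ (which automatically lie in $L^{r,1}(\rn)$, so $T(f)$ is well-defined through hypothesis (iii)), and then extend by density. Fixing such an $f$ and a level $\alpha>0$, apply the standard Calder\'on--Zygmund decomposition at height $\alpha$ to produce $f=g+b$ with $b=\sum_{j} b_j$, each $b_j$ supported in a cube $Q_j$, mean zero on $Q_j$, satisfying $\|b_j\|_{L^1}\le C\alpha|Q_j|$, the cubes $Q_j$ pairwise disjoint with $\sum_{j}|Q_j|\le C\alpha^{-1}\|f\|_{L^1}$, and $\|g\|_{L^\nf}\le C\alpha$ with $\|g\|_{L^1}\le\|f\|_{L^1}$.

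For the good part, the simple interpolation estimate $\|g\|_{L^{r,1}(\rn)}\le C(r)\|g\|_{L^\nf}^{1-1/r}\|g\|_{L^1}^{1/r}$ (obtained from $g^*(t)\le\min(\|g\|_{L^\nf},\|g\|_{L^1}/t)$ and splitting the integral defining the $L^{r,1}$ quasi-norm at $t=\|g\|_{L^1}/\|g\|_{L^\nf}$) combines with hypothesis (iii) to give $\|T(g)\|_{\wlr(\rn)}\le C\|T\|_{L^{r,1}\to\wlr}\,\alpha^{1-1/r}\|f\|_{L^1}^{1/r}$; Chebyshev's inequality then yields $|\{|T(g)|>\alpha/2\}|\le C\|T\|_{L^{r,1}\to\wlr}^{r}\|f\|_{L^1}/\alpha$. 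For the bad part, let $\Omega=\bigcup_{j} Q_j^*$ with $Q_j^*$ the concentric dilate of $Q_j$ by factor $2\sqrt n$, so $|\Omega|\le C\|f\|_{L^1}/\alpha$. The decisive move is to obtain, for $x\notin\Omega$, the pointwise identity $T(b)(x)=\sum_{j} T(b_j)(x)$: since $f$ (hence $b$) is compactly supported and $x\notin\bigcup_{j}Q_j$, the representation \eqref{rep} gives $T(b)(x)=\int K(x,y)b(y)\,dy$, and Fubini (justified by (i)) exchanges sum and integral owing to the disjointness of the supports of the $b_j$. The mean-zero condition $\int b_j=0$ then yields $|T(b_j)(x)|\le\int_{Q_j}|K(x,y)-K(x,c_j)||b_j(y)|\,dy$ with $c_j$ the center of $Q_j$; integrating over $\Omega^c$ and invoking $|x-c_j|\ge 2|y-c_j|$ for $y\in Q_j$ together with hypothesis (ii) produces $\int_{\Omega^c}|T(b)|\,dx\le A'\sum_j\|b_j\|_{L^1}\le CA'\|f\|_{L^1}$, so Chebyshev gives $|\{|T(b)|>\alpha/2\}|\le|\Omega|+CA'\|f\|_{L^1}/\alpha\le C(1+A')\|f\|_{L^1}/\alpha$.

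Summing the good and bad part estimates yields the weak-type $(1,1)$ bound on compactly supported $f\in L^1(\rn)$ with constant proportional to $A'+\|T\|_{L^{r,1}\to\wlr}$. The extension to general $f\in L^1(\rn)$ is carried out by truncation: set $f_N=f\chi_{\{|f|\le N,\,|x|\le N\}}$; since $T$ is linear on $L^{r,1}(\rn)$ by its kernel representation, the just-proved weak-type $(1,1)$ bound applied to the differences $f_N-f_M$ (which are compactly supported $L^1$ functions) shows that $\{T(f_N)\}$ is Cauchy in measure, so an a.e. subsequential limit defines the desired extension $T(f)$, inheriting the bound. The main obstacle is the pointwise identity $T(b)(x)=\sum_j T(b_j)(x)$ for $x\notin\Omega$: for compactly supported $f$ this reduces to a clean Fubini argument based on \eqref{rep} and (i), but this is precisely the step where the countable subadditivity framework advanced earlier in the paper (e.g., via Propositions \ref{P00} or \ref{P0} applied to the restricted-weak-type bound implicit in (iii)) provides the conceptually cleaner alternative, replacing the ad hoc convergence arguments traditionally required here.
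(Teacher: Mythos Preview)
Your argument has two genuine gaps. First, the claim that compactly supported $L^1(\rn)$ functions ``automatically lie in $L^{r,1}(\rn)$'' is false: for instance, $f(x)=|x|^{-(n-\eps)}\chi_{\{|x|<1\}}$ with $\eps>0$ small lies in $L^1$ but not in $L^{r,1}$ once $r>n/(n-\eps)$. You must instead work on a genuine dense subclass of $L^1\cap L^{r,1}$; the paper simply takes $f\in L^1\cap L^{r,1}$. Second, by decomposing at height $\al$ rather than $\ga\al$, your good-part estimate produces the factor $\|T\|_{L^{r,1}\to L^{r,\nf}}^{\,r}$ and your bad-part estimate produces $1+A'$, so your final weak-$(1,1)$ constant is $C_n(1+A'+\|T\|^r)$, which is \emph{not} proportional to $A'+\|T\|$ as the theorem asserts. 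The paper decomposes at height $\ga\la$ and then optimizes by choosing $\ga=1/\|T\|_{L^{r,1}\to L^{r,\nf}}$, which balances the three contributions to yield the stated linear dependence.

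There is also a subtlety in your Fubini step that you underplay. Even for compactly supported $f$, the bad cubes $Q_j$ may be infinitely many, and for $x\in\Om^c$ that happens to lie in $\overline{\cup_j Q_j}$ one does not have $\dist(x,\cup_j Q_j)>0$, so hypothesis (i) alone does not give $\int|K(x,y)||b(y)|\,dy<\nf$; your parenthetical ``justified by (i)'' is therefore incomplete. The paper sidesteps this entirely: it observes that $\sum_j b_j=f-g$ converges in $L^{r,1}$ (dominated convergence), whence $T(\sum_{|j|>N}b_j)\to 0$ in $L^{r,\nf}$ by (iii), and then passes to an a.e.-convergent subsequence to obtain $T(f)=T(g)+\sum_j T(b_j)$ a.e.\ directly. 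This is exactly the countable-subadditivity mechanism the paper is advertising, and it is doing real work here rather than merely offering a ``conceptually cleaner alternative.''
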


\begin{proof} Fix $f\in L^1\cap L^{r,1}$, which is a dense subset of $L^1$.  For $\lambda>0$ fixed and a constant $\gamma>0$ to be specified shortly, we perform a Calder\'on--Zygmund decomposition of $f$ at level $\ga\lambda$, thereby writing $f=g+\sum_{j\in \zzz} b_j$ where 
\begin{align}\label{g}
\|g\|_{L^1} \leq \|f\|_{L^1} \qq\text{and} \qq |g|\leq 2^n \ga\lambda \q \text{a.e.}
\end{align}
and each $b_j$ is supported in a dyadic cube $Q_j$ such that
\begin{align}\label{b}
\|b_j\|_{L^1} \leq 2^{n+1} \ga\lambda |Q_j|  \qq\text{and} \qq \int_{Q_j} b_j(y)\, dy = 0.
\end{align}
Moreover, the dyadic cubes $Q_j$ have pairwise disjoint interiors and satisfy
\begin{align}\label{cubes}
\sum_{j\in \zzz}|Q_j|\leq (\ga\lambda)^{-1} \|f\|_{L_1}.
\end{align}

For each cube $Q_j$, let $y_j$ denote its center.  Let $Q_j^*$ denote the dilate of $Q_j$ with the same center $y_j$ and side $\ell(Q_j^*)=2\sqrt{n}\, \ell(Q_j) $.  Note that in view of the hypothesis (i), $T(b_j)$ admits the representation \eqref{rep} for $x\notin Q_j^*$.  Moreover, as
$$
|x-y_j|\ge 2|y-y_j| \qq\text{for all $y\in Q_j$ and $x\notin Q_j^*$,}
$$
hypotheses $(i)$ and $(ii)$ together with \eqref{b} yield
\begin{align}\label{b'}
\int_{\rn\setminus Q_j^*} |T(b_j)(x)| \, dx &\leq  \int_{\rn\setminus Q_j^*}\int_{Q_j} |K(x,y)-K(x,y_j)| |b_j(y)|\,dy\, dx \notag \\
&\leq  2^{n+1} A' \gamma\lambda |Q_j|.
\end{align}

By \eqref{g} and interpolation, we conclude that $g\in L^{r,1}$.  Indeed, 
\begin{equation}\label{bbb}
\|g\|_{L^{r,1}} \lesssim \|f \|_{L^1}^{\f 1r} (\ga \la)^{1-\f 1r}.
\end{equation}
Consequently, the series $\sum_{j\in \zzz} b_j= f-g \in L^{r,1}$ and converges a.e.  By the dominated convergence theorem, it follows that the series converges in $L^{r,1}$.  Invoking hypotheses (iii), we deduce that $|T (\sum_{|j|>N} b_j) |$ converges to zero in $L^{r,\nf}$ as $N\to \nf$.  A familiar argument by now then yields that
\begin{equation}\label{fix}
T(f) = T(g) + \sum_j  T(b_j)  \q \text{a.e.}
\end{equation}

To continue, we use hypothesis (iii), \eqref{g}, \eqref{cubes}, \eqref{b'}, \eqref{bbb}, 
and \eqref{fix} to 
derive the bound
\begin{align*}
&\Big| \{ x:\, |T(f)(x)|>\lambda \} \Big|\\
&\leq \Big| \{ x:\, |T(g)(x)|>\tfrac\lambda2 \} \Big|  + \sum_{j\in \zzz} |Q_j^*| + \Big| \{ x\notin \cup Q_j^*:\, \sum_{j\in\zzz}|T(b_j)(x)|>\tfrac\lambda2 \} \Big|\\
&\lesssim \big[ \lambda^{-1} \|T(g)\|_{L^{r, \infty}}\big]^r + (\ga\lambda)^{-1}\|f\|_{L^1} +  \lambda^{-1}\int_{\rn\setminus \cup Q_j^*}  \sum_{j\in\zzz}|T(b_j)(x)|\, dx\\
&\lesssim \big[\lambda^{-1} \| T\|_{L^{r,1}\to L^{r,\nf}} 
\|f \|_{L^1}^{\f 1r} (\ga \la)^{1-\f 1r}\big]^r+ (\ga\lambda)^{-1}\|f\|_{L^1}+A' \lambda^{-1}\|f\|_{L^1}\\
&\lesssim \lambda^{-1}(A'+\|T\|_{L^{r,1} \to L^{r,\nf}}) \|f\|_{L^1},
\end{align*}
having chosen $\ga= 1/\|T\|_{L^{r,1} \to L^{r,\nf}}$. All
  the implicit constants depend only on the dimension (and not on $A$, $f$, $\la$). 

This proves that $T$ satisfies weak-type $(1,1)$ bounds on $f\in L^1\cap L^{r,1}$.  Using the density of $L^1\cap L^{r,1}$ in $L^1$, we may therefore extend $T$ to a bounded operator from $L^1(\rn)$ to $L^{1,\infty}(\rn)$.
\end{proof}

\subsection{Subadditive $0$-local operators}

Finally,  we consider an application concerning subadditive operators that preserve the supports of functions with vanishing integral; we call such operators {\it $0$-local}.

\begin{thm}\label{local}
Let $1<r\le \infty$. Suppose $S$ is a subadditive operator defined on $L^1(\rn)+L^{r,1}(\rn)$ that is bounded from $L^{r,1}(\rn)$ to $L^{r,\infty}(\rn)$.
Assume that $S$ is  $0$-local in the sense that whenever $h$ is supported in a dyadic cube $Q$ and has vanishing integral, then 
$S(h)$ is supported in a fixed multiple $Q^*$ of $Q$.   Then  $S$ is of weak-type $(1,1)$. 
\end{thm}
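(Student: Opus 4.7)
The plan is to mimic the Calder\'on--Zygmund decomposition argument from the previous subsection, with the $0$-local property playing the role previously played by the kernel regularity hypothesis (ii). Since $L^1(\rn) \cap L^{r,1}(\rn)$ is dense in $L^1(\rn)$, it suffices to establish the weak-type $(1,1)$ estimate for $f \in L^1 \cap L^{r,1}$. Given such an $f$ and $\la > 0$, I would perform a Calder\'on--Zygmund decomposition of $f$ at height $\ga\la$ (with $\ga>0$ to be chosen at the end), producing $f=g+\sum_j b_j$ with $|g|\le 2^n\ga\la$, $\|g\|_{L^1}\le \|f\|_{L^1}$, each $b_j$ supported in a dyadic cube $Q_j$ with vanishing integral, $\|b_j\|_{L^1}\lesssim \ga\la |Q_j|$, and $\sum_j |Q_j|\le (\ga\la)^{-1}\|f\|_{L^1}$.

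Interpolating between $L^1$ and $L^\infty$ gives $\|g\|_{L^{r,1}}\lesssim \|f\|_{L^1}^{1/r}(\ga\la)^{1-1/r}$, and hypothesis (iii) then furnishes the good-part bound $|\{|S(g)|>\la/2\}|\lesssim \|S\|_{L^{r,1}\to L^{r,\infty}}^r \|f\|_{L^1}\ga^{r-1}\la^{-1}$. For the bad part I would first observe that $\sum_j b_j = f-g \in L^{r,1}$ and that this series converges in $L^{r,1}$ by dominated convergence. Hypothesis (iii) then forces $\|S(\sum_{|j|>N} b_j)\|_{L^{r,\infty}}\to 0$ as $N\to\infty$, so along a subsequence $N_k\to\infty$ this convergence holds pointwise $\nu$-a.e. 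Combining this with the pairwise subadditivity of $S$, exactly as in the proofs of Propositions~\ref{P00} and~\ref{P1}, yields the countable subadditivity
\[
\Bigl|S\Bigl(\sum_j b_j\Bigr)\Bigr| \le \sum_j |S(b_j)| \q \text{a.e.}
\]

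The $0$-local property now enters decisively: since each $b_j$ is supported in $Q_j$ and has vanishing integral, $|S(b_j)|$ is supported in the dilate $Q_j^*$, so $\sum_j |S(b_j)|$ vanishes outside $\bigcup_j Q_j^*$. The standard splitting
\[
|\{|S(f)|>\la\}| \le \bigl|\{|S(g)|>\tfrac{\la}{2}\}\bigr| + \Bigl|\bigcup_j Q_j^*\Bigr|
\]
together with $|\bigcup_j Q_j^*| \lesssim (\ga\la)^{-1}\|f\|_{L^1}$ and the choice $\ga = 1/\|S\|_{L^{r,1}\to L^{r,\infty}}$ optimally balances the two contributions and delivers the desired weak-type $(1,1)$ bound, with constant proportional to $\|S\|_{L^{r,1}\to L^{r,\infty}}$. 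The main subtlety is the passage from pairwise to countable subadditivity of $S$ on the bad-part series, but this is precisely the content of the machinery developed earlier in the paper: hypothesis (iii) combined with the $L^{r,1}$-convergence of $\sum_j b_j$ supplies exactly the input it needs. The endpoint case $r=\infty$ is handled analogously (and more easily, since the good-part estimate becomes essentially immediate).
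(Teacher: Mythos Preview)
Your proposal is correct and follows essentially the same argument as the paper: Calder\'on--Zygmund decomposition at height $\gamma\lambda$, the $L^{r,1}$ bound on $g$ giving the good-part estimate, countable subadditivity of $S$ on the bad part derived from the $L^{r,1}\to L^{r,\infty}$ boundedness, and the $0$-local property confining $\sum_j|S(b_j)|$ to $\bigcup_j Q_j^*$, finished with the same choice $\gamma=1/\|S\|_{L^{r,1}\to L^{r,\infty}}$. The only cosmetic slip is your reference to ``hypothesis (iii)'', which belongs to the previous theorem; here the relevant assumption is simply the stated $L^{r,1}\to L^{r,\infty}$ boundedness of $S$.
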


\begin{proof}  Assume initially that $f $ lies in $ L^1 \cap L^{r,1} $, which is dense in $ L^1$.  For $\lambda>0$ fixed and a constant $\gamma>0$ to be chosen shortly, we apply the Calder\'on--Zygmund  decomposition to $f$ at height $\ga\la$.  In this way, we may write $ f=g+b$  so that \eqref{g} and \eqref{b} are satisfied.  As $g\in L^1\cap L^\infty$, we have $g\in L^{r,1}$ and it satisfies the bound \eqref{bbb}.  By hypothesis, this implies
\begin{equation}\label{sg}
\|S(g)\|_{L^{r,\nf}}\lesssim \| S\|_{L^{r,1}\to L^{r,\nf}}  \|f \|_{L^1}^{\f 1r} (\ga \la)^{1-\f 1r}.
\end{equation}

As $b=f-g\in L^{r,1}$ and $S$ is bounded from $L^{r,1}$ to $L^{r,\infty}$, we may deduce the countable subadditivity property
$$
| S(b) | \le \sum_j |S(b_j) |   \qq\textup{a.e.}
$$
As each $b_j$ is supported in $Q_j $ and has mean value zero, our hypotheses guarantee that $S(b_j)$ is supported in $Q_j^*$.  Hence, $\sum_j |S(b_j) |$  is supported in $\cup_j Q_j^*$, which has  measure bounded by a constant multiple of $(\ga\lambda)^{-1}\|f\|_{L^1}$. 

Using that $|S(f)|\le |S(g)|+|S(b)|$ together with \eqref{sg} and choosing  $\ga=   1/\|S\|_{L^{r,1} \to L^{r,\nf}}$, we may bound
\begin{align*}
\lambda \bigl| \bigl\{ |S(f)|>\lambda\bigr\}\bigr| &\leq \lambda\bigl| \bigl\{  |S(g)|>\tfrac\lambda2\bigr\}\bigr| +\lambda\bigl| \bigl\{  |S(b)|>\tfrac\lambda2\bigr\}\bigr|\\
&\lesssim \lambda \bigl(\tfrac{ \|S(g)\|_{L^{r,\infty}}}{\lambda}\bigr)^r + \lambda(\ga\lambda)^{-1}\|f\|_{L^1}\\
&\lesssim \|S\|_{L^{r,1} \to L^{r,\nf}} \|f\|_{L^1},
\end{align*}
where all implicit constants are independent of $\lambda$.  This proves the weak-type $(1,1)$ bounds for functions $f\in L^1\cap L^{r,1}$.  Since  
$S$ is already defined on $L^1$, a density argument yields
$$
\|S(f)\|_{L^{1,\infty}}\lesssim \|S\|_{L^{r,1} \to L^{r,\nf}} \|f\|_{L^1}
$$
for all $f\in L^1$. 
\end{proof}

Note that by an application of Hunt's interpolation theorem, the operator $S$ in Theorem~\ref{local} maps $L^p(\rn)$  to $L^p(\rn) $ for every $1<p<r$.
 
If the operator $S$ were only defined on $L^{r,1}$ and not on $L^1$, then we can deduce that it has a unique bounded subadditive extension on $L^1$ which is of weak-type $(1,1)$ and a unique bounded subadditive extension on $L^p$ for $1<p<r$.  

Examples of  $0$-local  subadditive operators in the sense of Theorem~\ref{local}  
can be constructed as follows: Let $h_I$ be the Haar functions, where $I$ ranges over all 
dyadic intervals. These are equal to $| I |^{-1/2}$ on the left half of $I$ and $-| I |^{-1/2}$ on the right half.
Let $\mathcal  D_k$ be the set of all 
dyadic intervals of length $2^{-k}$. The dyadic martingale difference operator is 
\bee\label{sum2}
D_k (f) = \sum_{I\in\mathcal  D_k} \langle f, h_I\rangle h_I, \qq f\in L^1_{loc}(\mathbf R).
\eee
For a bounded and compactly supported sequence $\{a_{k,j}\}_{k,j\in \zzz}$, define
\bee\label{sum3}
S(f) = \sup_{j\in \mathbf Z} \Big| \sum_{k\in \mathbf Z} a_{k,j} D_k (f)\Big|, 
\eee
whenever $f\in L^1_{loc}(\mathbf R)$. Then $S$ is $0$-local; indeed, if $f$ is supported in a dyadic interval $I_0$ of 
length $2^{-k_0}$ and has vanishing integral, then for any $k<k_0$ and any $J$ of length $2^{-k}$ containing $I_0$ we have that $f$ 
is supported in the left or right half of $J$ on either of which $h_J$ is constant. But if $f$ has 
mean value zero, it follows that $D_k (f)=0$ when $k<k_0$. Therefore the smallest $k$ that appears in the sum in \eqref{sum3} is $k_0$. Also, the sum in \eqref{sum2} for $k=k_0$ 
contains only one term, namely the one corresponding to $I=I_0$.  Finally, the part of the sum in \eqref{sum2} with $k>k_0$ contains only terms which are supported in $I_0$. Thus, $S(f)$ is supported in $I_0$.

Such examples of operators $S$ are inspired by   maximal combinations of martingale difference operators on probability spaces equipped with dyadic filtrations~\cite{GK}.

 \end{document}